\theoremstyle{plain}
\newtheorem{theorem}{Theorem}[section]
\newtheorem{lemma}{Lemma}[section]
\newtheorem{proposition}{Proposition}[section]
\title[A rigidity theorem of self-expander]
{A rigidity theorem of self-expander}
\author [Z. Li and G. Wei]{Zhi Li and Guoxin Wei}
\address{Zhi Li \\  \newline \indent College of Mathematics and Information Science, Henan Normal University,
\newline \indent 453007, Xinxiang, Henan, China.}
\email{lizhihnsd@126.com}
\address{Guoxin Wei \\ \newline \indent School of Mathematical Sciences, South China Normal University,
\newline \indent 510631, Guangzhou,  China.}
\email{weiguoxin@tsinghua.org.cn}
\begin{document}
\maketitle

\begin{abstract}
In this paper, we completely classify $3$-dimensional complete self-expanders with constant norm $S$ of the second fundamental form and constant $f_{3}$ in Euclidean space $\mathbb R^{4}$, where $h_{ij}$
are components of the second fundamental form, $S=\sum_{i,j}h^{2}_{ij}$ and $f_{3}=\sum_{i,j,k}h_{ij}h_{jk}h_{ki}$.
\end{abstract}

\footnotetext{2020 \textit{Mathematics Subject Classification}:
53E10, 53C40.}
\footnotetext{{\it Key words and phrases}: mean curvature flow,
self-expander, generalized maximum principle.}

\section{introduction}
\vskip2mm
\noindent

Let $X: M^{n}\to \mathbb{R}^{n+p}$ be an $n$-dimensional  submanifold in the $(n+p)$-dimensional Euclidean space
$\mathbb{R}^{n+p}$.
If  a family of hypersurfaces   for $t \in [0, T)$,  satisfies
$$
\dfrac{\partial X(t)}{\partial t}=\vec{H(t)}, \ \  X(0)=X,
$$
it is called mean curvature flow (MCF), where $\vec H(t)$ denotes mean curvature vector.
An $n$-dimensional smooth immersed submanifold $X: M^{n}\to \mathbb{R}^{n+p}$ is called self-expanders of MCF if its mean curvature vector $\vec H$ satisfies the equation
\begin{equation}\label{1.1-1}
\vec H= X^{\perp},
\end{equation}
where $X^{\perp}$ and $\vec H$ denote the normal component of the position vector $X$ and mean curvature vector, respectively. $M ^{n}$ is a self-expander if and only if $\sqrt{2t} M ^{n}$ is a MCF, $t\in(0,\infty)$.

Self-expanders have a very important role in the study of MCF. They describe the asymptotic longtime behavior for MCF and the local structure of MCF after the singularities in
the very short time. In the case of codimension one and under certain assumptions on the initial hypersurface at infinity, Ecker and Huisken \cite{EH}
showed that the solutions of mean curvature flow of
entire graphs in euclidean space exist for all times $t>0$ and converges to a self-expander. In \cite{Sta}, Stavrou
proved the same result under weaker hypotheses that the initial hypersurface has a unique
tangent cone at infinity. Self-expanders also appear in the mean curvature evolution of cones.  Ilmanen \cite{Ilm} studied
the existence of E-minimizing self-expanding hypersurfaces which converge
to prescribed closed cones at infinity in Euclidean space. Ding \cite{D} studied self-expanders and their
relationship to minimal cones in Euclidean space. Bernstein and Wang (\cite{BW1} and \cite{BW2}) obtained various results on asymptotically conical self-expanders.
In \cite{CZ}, Cheng and Zhou studied some properties of complete properly immersed self-expanders and proved some results related to the spectrum of the drifted Laplacian for self-expanders in higher codimension.
Recently, Ancari and Cheng \cite{AC} mainly studied self-expander hypersurfaces in Euclidean space whose mean curvatures have some linear growth controls. Ishimura \cite{Ish} and Halldorsson \cite{H} have completed the classification problem of self-expander curves in $\mathbb{R}^{2}$. Ancari and Cheng \cite{AC} prove that the surfaces $\Gamma\times \mathbb{R}$ with the product metric are the
only complete self-expander surfaces immersed in $\mathbb{R}^{3}$ with constant scalar curvature, where $\Gamma$ is a complete self-expander curve immersed in $\mathbb{R}^{2}$.
There are many other works about self-expanders (see \cite{AIC}, \cite{FM}, \cite{Smo}, \cite{XY} and references therein).

Recently, by studying the infimum of $H^{2}$, Li and Wei \cite{LW2} gave a classification for complete self-expander surfaces immersed in $\mathbb{R}^{3}$ with constant squared norm of the second fundamental form by making use of the generalized maximum principle.
For the higher dimension $n$, it is not easy to classify self-expander in Euclidean space with
constant squared norm of the second fundamental form.
In this paper, for $n=3$, by studying the infimum of $H$, we prove
\begin{theorem}\label{theorem 1.1}
Let $X: M^{3}\to \mathbb{R}^{4}$ be a
$3$-dimensional complete self-expanders in $\mathbb R^{4}$.
If the squared norm $S$ of the second fundamental form and $f_{3}$ are constant, then $X: M^{3}\to \mathbb{R}^{4}$ is a hyperplane
$\mathbb {R}^{3}$ through the origin, where $h_{ij}$
are components of the second fundamental form, $S=\sum_{i,j}h^{2}_{ij}$ and $f_{3}=\sum_{i,j,k}h_{ij}h_{jk}h_{ki}$.
\end{theorem}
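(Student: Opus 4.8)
The plan is to work with the drift Laplacian $\mathcal{L}=\Delta+\langle X,\nabla\,\cdot\,\rangle$ adapted to self-expanders. In codimension one equation \eqref{1.1-1} reads $H=\langle X,\nu\rangle$ for the scalar mean curvature $H$ (up to the sign convention for $\nu$). Using that the Euclidean derivative of the position vector in a tangent direction $e_i$ is $e_i$ itself, one computes $\nabla_i H=-h_{ik}\langle X,e_k\rangle$ and $\nabla_i\nabla_j H=-(\nabla_i h_{jk})\langle X,e_k\rangle-h_{ij}-H(A^2)_{ij}$, where $(A^2)_{ij}=h_{ik}h_{kj}$. Combining this with the Codazzi equations and the Simons identity $\Delta h_{ij}=\nabla_i\nabla_j H+H(A^2)_{ij}-Sh_{ij}$, the first-derivative terms cancel and I obtain the fundamental identities
\begin{equation}
\mathcal{L}H=-(1+S)H,\qquad \mathcal{L}h_{ij}=-(1+S)h_{ij}.
\end{equation}
Contracting the second one against $h_{ij}$ and against $(A^2)_{ij}$, and using $\tfrac12\mathcal{L}S=h_{ij}\mathcal{L}h_{ij}+|\nabla A|^2$ together with the analogous expansion for $f_3$, this yields
\begin{equation}
\tfrac12\mathcal{L}S=|\nabla A|^2-(1+S)S,\qquad \tfrac13\mathcal{L}f_3=-(1+S)f_3+2\sum_{i,j,k,l}h_{ij}\nabla_l h_{ik}\nabla_l h_{jk}.
\end{equation}

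Since $S$ and $f_3$ are constant, the left-hand sides vanish, leaving the two pointwise identities
\begin{equation}
|\nabla A|^2=(1+S)S,\qquad \sum_{i,j,k,l}h_{ij}\nabla_l h_{ik}\nabla_l h_{jk}=\tfrac12(1+S)f_3.
\end{equation}
The first of these is the engine of the argument: because $1+S>0$, the moment one shows that $A$ is parallel, i.e. $\nabla A\equiv0$, it forces $(1+S)S\equiv0$ and hence $S\equiv0$. Thus the whole theorem reduces to proving that $\nabla A\equiv0$ (equivalently, that $H\equiv0$).

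To prove this I would study the infimum of $H$ via the generalized maximum principle. Constancy of $S$ bounds the second fundamental form, so $H^2\le 3S$ is bounded and, by the Gauss equation, the Ricci curvature is bounded below; the generalized maximum principle for $\mathcal{L}$ on a complete self-expander (\cite{CZ}) then applies. Applied to $H$, the identity $\mathcal{L}H=-(1+S)H$ with $1+S$ a positive constant gives $\inf H\le 0\le\sup H$, together with a minimizing sequence along which $|\nabla H|\to0$. The decisive structural input is special to $n=3$: with $S$ and $f_3$ fixed, the three power sums of the principal curvatures $\lambda_1,\lambda_2,\lambda_3$ are pinned down by the single varying function $H$, so by Newton's identities every symmetric function — in particular $\det A$ and $\mathrm{tr}(A^4)$ — becomes an explicit polynomial in $H$. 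Moreover, differentiating $S$ and $f_3$ shows that in a principal frame $\sum_p\lambda_p\nabla_l h_{pp}=\sum_p\lambda_p^2\nabla_l h_{pp}=0$ for every $l$, which together with the two identities above constrains $\nabla A$ rigidly at each point. Feeding the Omori--Yau sequence for $\inf H$ into these relations, I expect to force the constant $(1+S)S=|\nabla A|^2$ to vanish, hence $H\equiv0$.

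Once $\nabla A\equiv0$, the identity $|\nabla A|^2=(1+S)S$ gives $S\equiv0$, so $A\equiv0$ and $M^3$ is totally geodesic; an affine $3$-plane is a self-expander exactly when $X^\perp=\vec H=0$, i.e. when it passes through the origin, which is the asserted conclusion. The main obstacle I anticipate is the maximum-principle step: the drift field $\langle X,\nabla\,\cdot\,\rangle$ is unbounded because $|X|$ is unbounded on a non-compact expander, so one must use the weighted (Cheng--Zhou) form of the principle rather than the classical Omori--Yau form, and the gradient term $\langle X,\nabla H\rangle=-\langle AX^{T},X^{T}\rangle$ must be controlled along the extremizing sequence. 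The remaining delicate point is the algebraic closure of the argument — ruling out the borderline configurations in which two principal curvatures coincide — and this is precisely where the extra hypothesis that $f_3$ (and not only $S$) be constant is essential.
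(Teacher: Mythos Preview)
Your setup is correct and aligned with the paper's: the $\mathcal{L}$-identities for $H$, $S$, and $f_3$; the pointwise relations $|\nabla A|^2=(1+S)S$ and $2\sum h_{ij}h_{ikl}h_{jkl}=(1+S)f_3$ coming from constancy; the observation that in dimension $3$ the principal curvatures are determined by $(H,S,f_3)$; and the plan to invoke Omori--Yau (Ricci bounded below via the Gauss equation and constant $S$). The paper's endgame differs slightly from yours: rather than aiming for $\nabla A\equiv0$ directly, it proves $\sup H=\inf H=\tfrac{3f_3}{2S}$, so $H$ is constant, hence all principal curvatures are constant, and then Lawson's isoparametric classification together with $\mathcal{L}H=-(1+S)H$ forces $H=0$, contradicting $S>0$.

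The genuine gap is at the step you yourself flag as ``delicate'': you never actually run the maximum-principle argument, and the version you sketch would not close as written. Two concrete points. First, the paper does \emph{not} use a weighted maximum principle for $\mathcal{L}$; it uses only the classical Omori--Yau principle for $\Delta$. The unbounded drift is neutralized by the structural identity $\nabla_iH=-\lambda_i\langle X,e_i\rangle$: along the extremizing sequence $|\nabla H|\to0$, so whenever $\bar\lambda_i\neq0$ one gets $\langle X,e_i\rangle\to0$, and this is exactly what makes the limiting Hessian $\bar H_{,ij}=-\bar\lambda_i\delta_{ij}-\bar H\bar\lambda_i^2\delta_{ij}$ usable. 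Second, even granting a principle for $\mathcal{L}$, your conclusion $\inf H\le0\le\sup H$ is far too weak; what is needed is the \emph{exact} limiting value. The paper obtains it by a case analysis on the multiplicities of $(\bar\lambda_1,\bar\lambda_2,\bar\lambda_3)$: when they are pairwise distinct, the three linear relations $\sum_i\bar h_{iik}=\sum_i\bar\lambda_i\bar h_{iik}=\sum_i\bar\lambda_i^2\bar h_{iik}=0$ (from $\nabla H\to0$, $\nabla S=0$, $\nabla f_3=0$) have Vandermonde coefficients, forcing $\bar h_{iik}=0$ for all $i,k$, after which the $S$- and $f_3$-identities give $\bar H=\tfrac{3f_3}{2S}$; when two eigenvalues coincide, one must pass to second order (the relations $\sum_i\bar\lambda_i\bar h_{iikl}=-\sum_{i,j}\bar h_{ijk}\bar h_{ijl}$ and $\sum_i\bar\lambda_i^2\bar h_{iikl}=-2\sum_{i,j}\bar\lambda_i\bar h_{ijk}\bar h_{ijl}$ together with the explicit $\bar H_{,ij}$) and derive a contradiction with $S>0$. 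This case-by-case computation is the substance of the proof, and it is precisely what your proposal leaves out.
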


\vskip5mm
\section {Preliminaries}
\vskip2mm

\noindent

Let $X: M^{n} \rightarrow\mathbb{R}^{n+1}$ be an
$n$-dimensional hypersurface of $(n+1)$-dimensional Euclidean space $\mathbb{R}^{n+1}$. We choose a local orthonormal frame field
$\{e_{A}\}_{A=1}^{n+1}$ in $\mathbb{R}^{n+1}$ with dual coframe field
$\{\omega_{A}\}_{A=1}^{n+1}$, such that, restricted to $M^{n}$,
$e_{1},\cdots, e_{n}$  are tangent to $M^{n}$. From now on,  we use the following conventions on the ranges of indices,
$$
 1\leq i,j,k,l\leq n.
$$
 $\sum_{i}$ means taking  summation from $1$ to $n$ for $i$.
Then we have
\begin{equation*}
dX=\sum_i\limits \omega_{i} e_{i}, \ \ de_{i}=\sum_{j}\limits \omega_{ij}e_{j}+\omega_{i n+1}e_{n+1},
\end{equation*}
\begin{equation*}
de_{n+1}=\omega_{n+1 i}e_{i}, \ \ \omega_{n+1i}=-\omega_{in+1},
\end{equation*}
where $\omega_{ij}$ is the Levi-Civita connection of the hypersurface.

\noindent By  restricting  these forms to $M$,  we get
\begin{equation}\label{2.1-1}
\omega_{n+1}=0.
\end{equation}

\noindent Taking exterior derivatives of \eqref{2.1-1}, we obtain
\begin{equation}\label{2.1-2}
\omega_{in+1}=\sum_{j} h_{ij}\omega_{j},\quad
h_{ij}=h_{ji}.
\end{equation}

$$
h=\sum_{i,j}h_{ij}\omega_i\otimes\omega_j, \ \ H= \sum_i\limits h_{ii}
$$
are called  second fundamental form and mean curvature  of $X: M\rightarrow\mathbb{R}^{n+1}$, respectively.
Let $S=\sum_{i,j}\limits (h_{ij})^2$ be  the squared norm
of the second fundamental form  of $X: M\rightarrow\mathbb{R}^{n+1}$.
The induced structure equations of $M$ are given by
\begin{equation*}
d\omega_{i}=\sum_j \omega_{ij}\wedge\omega_j, \quad  \omega_{ij}=-\omega_{ji},
\end{equation*}
\begin{equation*}
d\omega_{ij}=\sum_k \omega_{ik}\wedge\omega_{kj}-\frac12\sum_{k,l}
R_{ijkl} \omega_{k}\wedge\omega_{l},
\end{equation*}
where $R_{ijkl}$ denote components of the curvature tensor of the hypersurface.
Hence,
the Gauss equations are given by
\begin{equation}\label{2.1-3}
R_{ijkl}=h_{ik}h_{jl}-h_{il}h_{jk}.
\end{equation}

\noindent
Defining the
covariant derivative of $h_{ij}$ by
\begin{equation}\label{2.1-4}
\sum_{k}h_{ijk}\omega_k=dh_{ij}+\sum_kh_{ik}\omega_{kj}
+\sum_k h_{kj}\omega_{ki}.
\end{equation}
we obtain the Codazzi equations
\begin{equation}\label{2.1-5}
h_{ijk}=h_{ikj}.
\end{equation}
By taking exterior differentiation of \eqref{2.1-4}, and
defining
\begin{equation}\label{2.1-6}
\sum_lh_{ijkl}\omega_l=dh_{ijk}+\sum_lh_{ljk}\omega_{li}
+\sum_lh_{ilk}\omega_{lj}+\sum_l h_{ijl}\omega_{lk},
\end{equation}
we have the following Ricci identities
\begin{equation}\label{2.1-7}
h_{ijkl}-h_{ijlk}=\sum_m
h_{mj}R_{mikl}+\sum_m h_{im}R_{mjkl}.
\end{equation}
Defining
\begin{equation}\label{2.1-8}
\begin{aligned}
\sum_mh_{ijklm}\omega_m&=dh_{ijkl}+\sum_mh_{mjkl}\omega_{mi}
+\sum_mh_{imkl}\omega_{mj}+\sum_mh_{ijml}\omega_{mk}\\
&\ \ +\sum_mh_{ijkm}\omega_{ml}
\end{aligned}
\end{equation}
and taking exterior differentiation of  \eqref{2.1-6}, we get
\begin{equation}\label{2.1-9}
\begin{aligned}
h_{ijklq}-h_{ijkql}&=\sum_{m} h_{mjk}R_{milq}
+ \sum_{m}h_{imk}R_{mjlq}+ \sum_{m}h_{ijm}R_{mklq}.
\end{aligned}
\end{equation}

\noindent The $\mathcal{L}$-operator is defined by
\begin{equation*}
\mathcal{L}f=\Delta f+\langle X,\nabla f\rangle,
\end{equation*}
where $\Delta$ and $\nabla$ denote the Laplacian and the gradient
operator, respectively.
\noindent
We define the function $f_{3}$ as follows:
$$f_{3}=\sum_{i,j,k}h_{ij}h_{jk}h_{ki},$$
then we get
\begin{equation}\label{2.1-10}
\aligned
&\nabla_{l}f_{3}=3\sum_{i,j,k}h_{ijl}h_{jk}h_{ki}, \\
&\nabla_{p}\nabla_{l}f_{3}=3\sum_{i,j,k}h_{ijlp}h_{jk}h_{ki}+6\sum_{i,j,k}h_{ijl}h_{jkp}h_{ki}, \ \ \ l,p=1, 2, \cdots, n.
\endaligned
\end{equation}

\noindent We next suppose that  $X: M\rightarrow\mathbb{R}^{n+1}$
is a self-expanders, that is, $H=\langle X, e_{n+1}\rangle$. By a simple calculation, we have the following basic formulas.

\begin{equation}\label{2.1-11}
\aligned
\nabla_{i}H
=&-\sum_{k}h_{ik}\langle X, e_{k}\rangle, \\
\nabla_{j}\nabla_{i}H
=&-\sum_{k}h_{ijk}\langle X, e_{k}\rangle-h_{ij}-H\sum_{k}h_{ik}h_{kj}.
\endaligned
\end{equation}

\noindent
Using the above formulas and the Ricci identities, we can get the following Lemma:

\begin{lemma}\label{lemma 2.1}
Let $X:M^n\rightarrow \mathbb{R}^{n+1}$ be an $n$-dimensional complete  self-expanders in $\mathbb R^{n+1}$. We have
\begin{equation}\label{2.1-12}
\mathcal{L}H=-H(S+1),
\end{equation}
\begin{equation}\label{2.1-13}
\frac{1}{2}\mathcal{L}S
=\sum_{i,j,k}h_{ijk}^{2}-S(S+1),
\end{equation}
\begin{equation}\label{2.1-14}
\frac{1}{3}\mathcal{L}f_{3}
=2\sum_{i,j,k}h_{ijp}h_{jkp}h_{ki}-f_{3}(S+1),
\end{equation}
\end{lemma}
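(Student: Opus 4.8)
The plan is to reduce all three identities to a single ``master'' identity for the full second fundamental form, namely $\mathcal{L}h_{ij} = -(S+1)h_{ij}$, and then to obtain \eqref{2.1-12}, \eqref{2.1-13}, \eqref{2.1-14} by tracing and by applying the Leibniz rule for $\mathcal{L}$.

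First I would establish a Simons-type identity for $\Delta h_{ij} = \sum_k h_{ijkk}$. Using the full symmetry of $h_{ijk}$ in its three indices (which follows from $h_{ij}=h_{ji}$ and the Codazzi equation \eqref{2.1-5}), I rewrite $\sum_k h_{ijkk}$ and commute the two trailing covariant derivatives via the Ricci identity \eqref{2.1-7}. This produces $\nabla_i\nabla_j H$ together with two curvature contractions $\sum_{k,m}h_{mj}R_{mkik}$ and $\sum_{k,m}h_{km}R_{mjik}$; substituting the Gauss equation \eqref{2.1-3} and simplifying (the cubic terms of the form $\sum_{k,m}h_{im}h_{mk}h_{kj}$ cancel) yields
$$\Delta h_{ij} = \nabla_i\nabla_j H + H\sum_k h_{ik}h_{kj} - S h_{ij}.$$
I expect this bookkeeping --- tracking the index permutations in $h_{ijkl}$ and correctly contracting the Gauss tensor --- to be the main technical obstacle; everything afterwards is formal.

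Next I feed the self-expander formula \eqref{2.1-11} for $\nabla_j\nabla_i H$ into the Simons identity. The term $H\sum_k h_{ik}h_{kj}$ appears there with the opposite sign and cancels, leaving $\Delta h_{ij} = -\sum_k h_{ijk}\langle X,e_k\rangle - (S+1)h_{ij}$. Since $\langle X,\nabla h_{ij}\rangle = \sum_k h_{ijk}\langle X,e_k\rangle$, adding the drift term gives the master identity $\mathcal{L}h_{ij} = -(S+1)h_{ij}$.

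Finally I contract. Tracing the master identity over $i=j$ and using $\sum_i h_{iik}=\nabla_k H$ gives \eqref{2.1-12} at once (alternatively \eqref{2.1-12} follows even more quickly by tracing \eqref{2.1-11} directly). For \eqref{2.1-13}, I use the Leibniz rule $\mathcal{L}(fg)=f\mathcal{L}g+g\mathcal{L}f+2\langle\nabla f,\nabla g\rangle$ with $f=g=h_{ij}$ and sum over $i,j$: the cross term produces $\sum_{i,j,k}h_{ijk}^2$ and the master identity produces $-S(S+1)$. For \eqref{2.1-14}, I apply the three-factor Leibniz rule to $f_3=\sum_{i,j,k}h_{ij}h_{jk}h_{ki}$; the three $\mathcal{L}$-terms are equal by the cyclic symmetry of $f_3$ and sum to $-3(S+1)f_3$, while the three gradient cross terms are likewise equal and sum to $6\sum_{i,j,k,p}h_{ijp}h_{jkp}h_{ki}$, which gives the stated formula after dividing by $3$. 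The only care needed here is to verify, by relabeling the dummy indices, that the three cyclically-placed cross terms genuinely coincide.
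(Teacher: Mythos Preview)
Your proposal is correct. The paper gives no explicit proof, merely pointing to formula \eqref{2.1-11} and the Ricci identities, so your argument is a fleshed-out version of what the paper intends. Your organization via the single master identity $\mathcal{L}h_{ij}=-(S+1)h_{ij}$, from which all three formulas drop out by tracing and Leibniz, is clean; the paper's display \eqref{2.1-10} suggests it may have computed $\mathcal{L}f_3$ directly from the second derivatives of $f_3$, but the content is the same.
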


\noindent
\begin{lemma}\label{lemma 2.2}
Let $X:M^{n}\rightarrow \mathbb{R}^{n+1}$ be an $n$-dimensional complete self-expander in $\mathbb R^{n+1}$. If $S$ is constant,  we have
\begin{equation}\label{2.1-15}
\aligned
&\sum_{i,j,k,l}(h_{ijkl})^{2}=(S+2)\sum_{i,j,k}(h_{ijk})^{2}\\
&-6\sum_{i,j,k,l,p}h_{ijk}h_{il}h_{jp}h_{klp}
+3\sum_{i,j,k,l,p}h_{ijk}h_{ijl}h_{kp}h_{lp}.
\endaligned
\end{equation}
\end{lemma}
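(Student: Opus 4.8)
The plan is to reduce the identity to an explicit formula for $\mathcal{L}h_{ijk}$ and then contract it with $h_{ijk}$. Since $S$ is constant, \eqref{2.1-13} gives $\sum_{i,j,k}h_{ijk}^{2}=S(S+1)$, which is again a constant; hence $\mathcal{L}$ annihilates it. Expanding by the Leibniz rule for $\mathcal{L}$,
\[
0=\tfrac12\mathcal{L}\Big(\sum_{i,j,k}h_{ijk}^{2}\Big)=\sum_{i,j,k}h_{ijk}\,\mathcal{L}h_{ijk}+\sum_{i,j,k,l}h_{ijkl}^{2},
\]
so that $\sum_{i,j,k,l}h_{ijkl}^{2}=-\sum_{i,j,k}h_{ijk}\,\mathcal{L}h_{ijk}$. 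Everything therefore hinges on evaluating $\mathcal{L}h_{ijk}$.

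First I would record the second-order information. Combining the Simons-type identity $\Delta h_{ij}=\nabla_{i}\nabla_{j}H+H\sum_{m}h_{im}h_{mj}-Sh_{ij}$, obtained from the Codazzi equations \eqref{2.1-5}, the Ricci identity \eqref{2.1-7}, and the Gauss equation \eqref{2.1-3}, with the self-expander relation \eqref{2.1-11}, the $H$-weighted terms cancel and one obtains the clean equation $\mathcal{L}h_{ij}=-(S+1)h_{ij}$. Next I would differentiate in the direction $e_{k}$. The point is that $\nabla_{k}$ does not commute with $\mathcal{L}$; computing the commutator uses the elementary formula $\nabla_{k}\langle X,e_{l}\rangle=\delta_{kl}+Hh_{kl}$ together with the Ricci identities \eqref{2.1-7} and \eqref{2.1-9}. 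Using $\nabla_{k}S=0$, this yields
\[
\mathcal{L}h_{ijk}=-(S+2)h_{ijk}-H\sum_{l}h_{ijl}h_{kl}+C_{ijk}+\sum_{l}(h_{ijkl}-h_{ijlk})\langle X,e_{l}\rangle,
\]
where $C_{ijk}=\sum_{p}(h_{ijkpp}-h_{ijppk})$ collects the pure curvature commutator and is free of the position vector.

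Contracting with $h_{ijk}$, the first term reproduces $(S+2)\sum_{i,j,k}h_{ijk}^{2}$, which is the leading term of \eqref{2.1-15}. The remaining groups must then be simplified. In the last sum I would replace $h_{ijkl}-h_{ijlk}$ using \eqref{2.1-7} and \eqref{2.1-3}; the identity $\sum_{i,k}h_{ik}h_{ijk}=\tfrac12\nabla_{j}S=0$ kills two of the four resulting contractions, while $\sum_{l}h_{il}\langle X,e_{l}\rangle=-\nabla_{i}H$ (from \eqref{2.1-11}) and $\sum_{j,k}h_{ijk}\sum_{m}h_{jm}h_{mk}=\tfrac13\nabla_{i}f_{3}$ (from \eqref{2.1-10}) turn the survivor into the cross term $-\tfrac23\langle\nabla f_{3},\nabla H\rangle$. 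In $C_{ijk}$ I would expand the curvature through the Gauss equation and reduce the $\nabla R$ contributions by the contracted Codazzi identity $\sum_{p}h_{ipp}=\nabla_{i}H$.

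The main obstacle is the bookkeeping of $\sum_{i,j,k}h_{ijk}C_{ijk}$. The trace piece $h_{ijm}\sum_{p}R_{mpkp}$ of $C_{ijk}$, via $\sum_{p}R_{mpkp}=Hh_{mk}-\sum_{p}h_{mp}h_{pk}$, produces an $H$-weighted term that cancels $H\sum h_{ijk}h_{ijl}h_{kl}$ and leaves a genuine quartic contribution, while the $\nabla R$ terms, through $\sum_{p}h_{ipp}=\nabla_{i}H$, regenerate a $\langle\nabla f_{3},\nabla H\rangle$ contribution that cancels the cross term above; one must verify that after these cancellations no $H$-weighted or first-derivative terms survive. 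What then remains are exactly the two quartic contractions $\sum h_{ijk}h_{il}h_{jp}h_{klp}$ and $\sum h_{ijk}h_{ijl}h_{kp}h_{lp}$, and tracking the multiplicities (two curvature terms in \eqref{2.1-7}, three in \eqref{2.1-9}, together with the Leibniz expansion of $\nabla R$) fixes their coefficients at $-6$ and $+3$, which is \eqref{2.1-15}.
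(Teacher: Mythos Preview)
Your approach is correct and is a natural way to carry out what the paper records only as ``by making use of the Ricci identities \eqref{2.1-7}, \eqref{2.1-9} and a direct calculation.'' Organizing the computation through $\tfrac12\mathcal{L}\bigl(\sum h_{ijk}^{2}\bigr)=0$ (which is legitimate since $S$ constant forces $\sum h_{ijk}^{2}=S(S+1)$ to be constant by \eqref{2.1-13}) and then reducing everything to $\sum h_{ijk}\,\mathcal{L}h_{ijk}$ is exactly the standard route, and your commutator formula for $\mathcal{L}h_{ijk}$ is right. The paper gives no further detail, so there is nothing to compare beyond saying that your outline is a correct fleshing-out of its one-line proof; the cancellation of the $H$-weighted and $\langle X,e_l\rangle$-weighted terms and the emergence of the coefficients $-6$ and $+3$ are indeed just bookkeeping once the Gauss equation is substituted.
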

\begin{proof} By making use of the Ricci identities  \eqref{2.1-7},  \eqref{2.1-9} and a direct calculation, we
can  obtain \eqref{2.1-15}.
\end{proof}

In order to prove our results, we need the following generalized maximum principle
due to Omori \cite{O} and Yau \cite{Y}.

\vskip2mm
\noindent
\begin{lemma}\label{lemma 2.3}
Let $(M^{n},g)$ be a complete Riemannian manifold with Ricci curvature
bounded from below. For a $C^{2}$-function $f$ bounded from above, there exists a sequence
of points $\{p_{t}\}\in M^{n}$, such that
\begin{equation*}
\lim_{t\rightarrow\infty} f(p_{t})=\sup f,\quad
\lim_{t\rightarrow\infty} |\nabla f|(p_{t})=0,\quad
\limsup_{t\rightarrow\infty}\Delta f(p_{t})\leq 0.
\end{equation*}
\end{lemma}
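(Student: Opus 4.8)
The plan is to establish this classical principle by the barrier construction of Omori and Yau. If $M^{n}$ is compact the conclusion is immediate, since $f$ then attains its maximum at a point $p$ and the constant sequence $p_{t}\equiv p$ satisfies all three requirements; so I assume $M^{n}$ noncompact. Write $f^{*}=\sup_{M}f<\infty$, fix a base point $o\in M^{n}$, and let $r(x)=d(o,x)$. The essential geometric input is the Laplacian comparison theorem: the hypothesis $\mathrm{Ric}\geq-(n-1)K$ for some constant $K\geq0$ yields, away from $o$ and off the cut locus,
\begin{equation*}
\Delta r\leq (n-1)\sqrt{K}\coth\!\big(\sqrt{K}\,r\big)\leq (n-1)\Big(\sqrt{K}+\frac{1}{r}\Big),
\end{equation*}
the last step being the elementary inequality $\sqrt{K}\coth(\sqrt{K}r)\leq \sqrt{K}+1/r$.

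Next I would introduce a radial comparison function that forces a maximum while keeping its derivatives controlled. Take $\varphi=\sqrt{1+r^{2}}$, so that $\varphi\to\infty$ as $r\to\infty$, while $\varphi'(r)=r/\sqrt{1+r^{2}}\leq1$, $\varphi''(r)=(1+r^{2})^{-3/2}\leq1$, and crucially $\varphi'(r)/r=(1+r^{2})^{-1/2}\leq1$, which tames the $1/r$ singularity in the comparison estimate. Combining these bounds with the Laplacian comparison inequality gives
\begin{equation*}
\Delta\varphi=\varphi''(r)+\varphi'(r)\,\Delta r\leq \varphi''(r)+\varphi'(r)(n-1)\Big(\sqrt{K}+\frac{1}{r}\Big)\leq C
\end{equation*}
for a constant $C=C(n,K)$. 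For each $\varepsilon>0$ I then consider $f_{\varepsilon}=f-\varepsilon\varphi$; since $f\leq f^{*}$ and $\varphi\to\infty$, we have $f_{\varepsilon}\to-\infty$ at infinity, so $f_{\varepsilon}$ attains its supremum at some point $x_{\varepsilon}\in M^{n}$.

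At the maximizer $x_{\varepsilon}$, the first-order condition $\nabla f(x_{\varepsilon})=\varepsilon\varphi'(r)\nabla r$ together with $|\nabla r|=1$ and $\varphi'\leq1$ gives $|\nabla f|(x_{\varepsilon})\leq\varepsilon$, while the second-order condition $\Delta f(x_{\varepsilon})\leq\varepsilon\Delta\varphi(x_{\varepsilon})$ gives $\Delta f(x_{\varepsilon})\leq\varepsilon C$. To pin down the value, fix $\delta>0$ and choose $y$ with $f(y)>f^{*}-\delta$; then
\begin{equation*}
f^{*}\geq f(x_{\varepsilon})\geq f_{\varepsilon}(x_{\varepsilon})\geq f_{\varepsilon}(y)=f(y)-\varepsilon\sqrt{1+r(y)^{2}}>f^{*}-\delta-\varepsilon\sqrt{1+r(y)^{2}},
\end{equation*}
so $\liminf_{\varepsilon\to0}f(x_{\varepsilon})\geq f^{*}-\delta$, and letting $\delta\to0$ shows $f(x_{\varepsilon})\to f^{*}$. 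Setting $\varepsilon=1/t$ and relabeling $p_{t}=x_{1/t}$ produces the desired sequence with $f(p_{t})\to\sup f$, $|\nabla f|(p_{t})\to0$, and $\limsup_{t}\Delta f(p_{t})\leq0$.

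The one genuine technical obstacle is that $x_{\varepsilon}$ may fall on the cut locus of $o$, where $r$ and hence $\varphi$ are not smooth, so the second-order maximum test is not directly available. I would resolve this by Calabi's trick: replace $o$ by a nearby point $o'$ lying on a minimizing geodesic from $o$ to $x_{\varepsilon}$, and use $\tilde r(x)=d(o',o)+d(o',x)$, which is smooth near $x_{\varepsilon}$, dominates $r$ by the triangle inequality, agrees with $r$ at $x_{\varepsilon}$, and still obeys the Laplacian comparison bound. Building $\varphi$ from $\tilde r$ and repeating the argument recovers every inequality above. I expect this cut-locus bookkeeping to be the main point requiring care; the remainder is the routine choice and differentiation of the comparison function $\varphi$.
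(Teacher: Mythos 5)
Your proof is correct, but note that the paper itself does not prove this lemma at all: it is quoted as the known generalized maximum principle and attributed to Omori \cite{O} and Yau \cite{Y}, so there is no internal argument to compare against. What you have written is essentially Yau's original barrier proof (in the form standard since Cheng--Yau): perturb $f$ by $\varepsilon\sqrt{1+r^{2}}$, use Laplacian comparison under $\mathrm{Ric}\geq-(n-1)K$ to bound $\Delta\varphi$ by a constant $C(n,K)$, read off the three conclusions at the maximizer $x_{\varepsilon}$, and handle the cut locus by Calabi's trick. The key choices are sound: $\sqrt{1+r^{2}}$ is smooth at the base point $o$ (since $r^{2}$ is smooth there), $\varphi'\leq1$ kills the gradient term, and $\varphi'/r\leq1$ absorbs the $1/r$ singularity of the comparison estimate. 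Three small points you should make explicit if this were written out in full: (i) the attainment of the maximum of $f-\varepsilon\varphi$ uses completeness via Hopf--Rinow (the superlevel set $\{f_{\varepsilon}\geq f_{\varepsilon}(o)\}$ is closed and bounded, hence compact); (ii) in the Calabi step one should justify that $x_{\varepsilon}$ is not a cut point of the shifted base point $o'$ --- this follows because an interior subsegment of a minimizing geodesic is the unique minimizer between its endpoints and carries no conjugate points, so $d(o',\cdot)$ is genuinely smooth near $x_{\varepsilon}$, and $\tilde r\geq r$ with equality at $x_{\varepsilon}$ makes $x_{\varepsilon}$ still a local maximum of $f-\varepsilon\tilde\varphi$; (iii) the constant in $\Delta\tilde\varphi\leq C'$ stays uniform since cut points of $o$ lie at distance at least the injectivity radius at $o$, so $d(o',x_{\varepsilon})$ is bounded below once $o'$ is chosen close enough to $o$. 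One historical remark: the statement with only Ricci (rather than sectional) curvature bounded below is Yau's refinement of Omori's theorem, and your argument proves exactly that stronger version, which is the one the paper actually needs, since for a self-expander with constant $S$ only a Ricci lower bound is available from the Gauss equations.
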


 \vskip10mm
\section{Proof of Theorem 1.1}

\vskip2mm
\noindent

As we all know, if $S\equiv0$, $X:M^{3}\rightarrow \mathbb{R}^{4}$ is $\mathbb{R}^{3}$. Next, we consider that $S>0$.
We will prove the following proposition.

\begin{proposition}\label{proposition 3.1}
For a $3$-dimensional complete self-expander $X:M^{3}\rightarrow \mathbb{R}^{4}$ with  non-zero constant squared norm $S$ of the second fundamental form, if $f_{3}$ is constant,
we have that $\sup H=\frac{3f_{3}}{2S}$.
\end{proposition}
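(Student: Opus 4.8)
The plan is to pin down $\sup H$ by applying the Omori--Yau principle (Lemma~\ref{lemma 2.3}) to $H$, after extracting from the two constancy hypotheses the algebraic data that forces the value $\tfrac{3f_3}{2S}$. First I collect these consequences. Constancy of $S$ turns \eqref{2.1-13} into $\sum_{i,j,k}h_{ijk}^2=S(S+1)$ together with $\nabla S\equiv0$, and constancy of $f_3$ turns \eqref{2.1-14} into $2\sum_{i,j,k,p}h_{ijp}h_{jkp}h_{ki}=f_3(S+1)$ together with $\nabla f_3\equiv0$. Writing $x_k:=\langle X,e_k\rangle$, the first line of \eqref{2.1-11} reads $\nabla_iH=-\sum_k h_{ik}x_k$, i.e. $\nabla H=-h(X^{\top})$. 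Contracting the Hessian formula in the second line of \eqref{2.1-11} with $h_{ij}$ and using $\sum_{i,j}h_{ij}h_{ijk}=\tfrac12\nabla_kS=0$, $\sum_{i,j}h_{ij}^2=S$ and $\sum_{i,j}h_{ij}(h^2)_{ij}=f_3$ gives the pointwise identity
\[
\sum_{i,j}h_{ij}\nabla_i\nabla_jH=-S-Hf_3 ,
\]
whose trace is exactly $\mathcal{L}H=-H(S+1)$ from \eqref{2.1-12}. The two undifferentiated right--hand sides $-H(S+1)$ and $-S-Hf_3$ admit a fixed constant--coefficient combination proportional to $H-\tfrac{3f_3}{2S}$; this is the algebraic source of the target constant.

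Since $S$ is constant, $|h|$ is bounded, so the Gauss equation \eqref{2.1-3} bounds the Ricci curvature from below and $H^2\le3S$ is bounded; hence Lemma~\ref{lemma 2.3} applies to $H$ and yields points $p_t$ with $H(p_t)\to\sup H=:\bar H$, $|\nabla H|(p_t)\to0$ and $\limsup_t\Delta H(p_t)\le0$. Writing $\Delta H=\mathcal{L}H-\langle X,\nabla H\rangle=-H(S+1)-\langle X,\nabla H\rangle$ and $\langle X,\nabla H\rangle=-\langle h(X^{\top}),X^{\top}\rangle$, the whole question reduces to evaluating this drift along $\{p_t\}$. The structural input I would use is $\nabla H=-h(X^{\top})\to0$: in a frame diagonalizing $h$ with eigenvalues $\lambda_i$ this reads $\lambda_ix_i\to0$, while $\nabla S=\nabla f_3=0$ read $\sum_i\lambda_ih_{iik}=0$ and $\sum_i\lambda_i^2h_{iik}=0$ and $\sum_ih_{iik}=\nabla_kH\to0$. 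Together these constrain both $X^{\top}$ and the third--order tensor along the sequence, and I would use them to compute the limiting drift and then read off $\bar H=\tfrac{3f_3}{2S}$ from $\limsup\Delta H\le0$ together with the contracted identity above.

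The genuinely delicate point is exactly this drift. Unlike $\Delta$, the operator $\mathcal{L}$ carries the factor $X^{\top}$, whose length is a priori unbounded on the complete noncompact $M^3$, so $|\nabla H|\to0$ does not by itself control $\langle X,\nabla H\rangle=-\sum_i\lambda_ix_i^2$: a direction in which $\lambda_i\to0$ while $x_i\to\infty$ (with $\lambda_ix_i\to0$) would spoil the estimate, and the level set on which $S$ and $f_3$ are fixed does not, by itself, forbid an eigenvalue from degenerating. Ruling this out is where I expect the constancy of \emph{both} $S$ and $f_3$, and the fourth--order identity \eqref{2.1-15} of Lemma~\ref{lemma 2.2}, to be used in tandem: the first--order relations $\nabla S=\nabla f_3=0$ pin the degenerating directions of $h$ relative to $X^{\top}$, while \eqref{2.1-15} furnishes the a priori control on the covariant derivatives of $h$ that is needed to pass to the limit. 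Once the drift is evaluated the constant $\tfrac{3f_3}{2S}$ falls out of the linear algebra of $\mathcal{L}H=-H(S+1)$ and $\sum_{i,j}h_{ij}\nabla_i\nabla_jH=-S-Hf_3$.
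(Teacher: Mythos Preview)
Your setup is correct and matches the paper: Omori--Yau applies because $S$ constant bounds Ricci and $H^2\le 3S$; along the sequence $\nabla H\to0$, and the three first--order systems $\sum_i\bar h_{iik}=0$, $\sum_i\bar\lambda_i\bar h_{iik}=0$, $\sum_i\bar\lambda_i^2\bar h_{iik}=0$ from $\nabla H,\nabla S,\nabla f_3$ are exactly what the paper uses. Lemma~\ref{lemma 2.2} is indeed what guarantees that $\{h_{ijk}\}$ and $\{h_{ijkl}\}$ are bounded so one can take subsequential limits.

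The gap is in your last two paragraphs. The claim that the value $\tfrac{3f_3}{2S}$ ``falls out of the linear algebra of $\mathcal{L}H=-H(S+1)$ and $\sum_{i,j}h_{ij}\nabla_i\nabla_jH=-S-Hf_3$'' is not right: even if you controlled the drift perfectly (say $\langle X,\nabla H\rangle\to0$), the inequality $\limsup\Delta H\le0$ would only give $-\bar H(S+1)\le0$, i.e.\ $\bar H\ge0$, and your contracted identity is a single scalar relation that does not pin down $\bar H$ either. The Omori--Yau Laplacian inequality is essentially \emph{not} used in the paper; what is used is only $\nabla H\to0$. The value $\tfrac{3f_3}{2S}$ comes instead from the two \emph{pointwise} consequences of constancy you already wrote down, $\sum h_{ijk}^2=S(S+1)$ and $2\sum h_{ijp}h_{jkp}h_{ki}=f_3(S+1)$: when the $\bar\lambda_i$ are pairwise distinct, your $3\times3$ Vandermonde system forces $\bar h_{iik}=0$ for all $i,k$, so the only surviving components are $\bar h_{123}$ and its permutations, whence $6\bar h_{123}^2=S(S+1)$ and $4\bar H\,\bar h_{123}^2=f_3(S+1)$, and division gives $\bar H=\tfrac{3f_3}{2S}$.

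What you are missing entirely is the work needed when the limit eigenvalues are \emph{not} distinct, and this is where most of the paper's proof lives. If two (or three) of the $\bar\lambda_i$ coincide, the Vandermonde argument collapses and one must rule these cases out as impossible. Here the paper uses the \emph{second}--order information $\nabla^2S=\nabla^2f_3=0$, the Ricci identities, and the full Hessian formula $H_{,ij}=-\sum_kh_{ijk}x_k-h_{ij}-H(h^2)_{ij}$ (not just its trace or its $h$--contraction). The drift issue you flag is handled case by case: when all $\bar\lambda_i\ne0$, the relation $\bar\lambda_ix_i\to0$ gives $x_i\to0$ directly and the Hessian formula becomes usable; when some $\bar\lambda_i=0$, the paper avoids $x_i$ altogether and derives contradictions purely from the second--derivative identities for $S$ and $f_3$. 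Each degenerate configuration eventually forces $\bar h_{ijk}\equiv0$, contradicting $\sum h_{ijk}^2=S(S+1)>0$, or produces an impossible sign. Your proposal gestures at this (``the first--order relations pin the degenerating directions'') but does not carry it out; without that case analysis the argument is incomplete.
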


\begin{proof}
\noindent
We choose $e_{1}$ and $e_{2}$,  at each point $p\in M^{3}$,  such that
$$
h_{ij}=\lambda_i\delta_{ij}.
$$
From the definitions of $S$ and $H$, we obtain
$$
H^{2}\leq 3S.
$$
\noindent Since $S$ is constant, from the Gauss
equations, we know that the Ricci curvature of $X:M^{3} \to \mathbb{R}^{4}$
is bounded from below.
By applying the generalized maximum principle to the function $H$. Thus, there exists a sequence $\{p_{t}\}$ in $M^{3}$ such that
\begin{equation}\label{3.1-1}
\lim_{t\rightarrow\infty} H(p_{t})=\sup H,\quad
\lim_{t\rightarrow\infty} |\nabla H|(p_{t})=0,\quad
\limsup_{t\rightarrow\infty} \Delta H(p_{t})\leq 0.
\end{equation}

\noindent
Since $S$ is constant, by \eqref{2.1-13} and \eqref{2.1-15}, we know that
$\{h_{ij}(p_{t})\}$,  $\{h_{ijk}(p_{t})\}$ and $\{h_{ijkl}(p_{t})\}$ are bounded sequences for $ i, j, k, l = 1,2,3$.
Hence, we can assume that they convergence if necessary, taking a subsequence.
\begin{equation*}
\begin{aligned}
&\lim_{t\rightarrow\infty}H(p_{t})=\bar H, \ \ \lim_{t\rightarrow\infty}h_{ij}(p_{t})=\bar h_{ij}=\bar \lambda_i\delta_{ij}, \\
&\lim_{t\rightarrow\infty}h_{ijk}(p_{t})=\bar h_{ijk}, \ \  \lim_{t\rightarrow\infty}h_{ijkl}(p_{t})=\bar h_{ijkl}, \ \ i, j, k,l=1, 2, 3.
\end{aligned}
\end{equation*}

\noindent
By taking exterior derivative of $H$, from \eqref{3.1-1} and by taking limits,
we know
\begin{equation}\label{3.1-2}
\bar H_{,i}=\bar h_{11i}+\bar h_{22i}+\bar h_{33i}=0, \ \ i=1, 2, 3.
\end{equation}

\noindent
According to the definition of  the self-expander, \eqref{3.1-1} yields
$$H_{,i}=-\sum_{k}h_{ik}\langle X, e_{k}\rangle, \ \  \ \ i=1, 2, 3,$$
$$H_{,ij}=-\sum_{k}h_{ijk}\langle X, e_{k}\rangle-h_{ij}-H\sum_{k}h_{ik}h_{kj}, \ \ i,j=1, 2, 3.
$$
Thus, we get
\begin{equation}\label{3.1-3}
\bar H_{,i}=\bar h_{11i}+\bar h_{22i}+\bar h_{33i}=-\bar \lambda_{i}\lim_{t\rightarrow\infty} \langle X, e_{i} \rangle(p_{t}),\ \ \ \ i=1, 2, 3
\end{equation}
and
\begin{equation}\label{3.1-4}
\begin{cases}
\begin{aligned}
&\bar H_{,ii}=-\sum_{k}\bar h_{iik}\lim_{t\rightarrow\infty} \langle X, e_{k} \rangle(p_{t})-\bar \lambda_{i}-\bar H\bar \lambda^{2}_{i},\ \ i=1,2,3, \\
&\bar H_{,ij}=-\sum_{k}\bar h_{ijk}\lim_{t\rightarrow\infty} \langle X, e_{k} \rangle(p_{t}),\ \ i\neq j, \ \ i,j=1,2,3.
\end{aligned}
\end{cases}
\end{equation}

\noindent Since $S$ is constant, we obtain
$$
\sum_{i,j}h_{ij}h_{ijk}=0, \ \  \ k=1, 2, 3,
$$
$$
\sum_{i,j}h_{ij}h_{ijkl}+\sum_{i,j}h_{ijk}h_{ijl}=0, \ \  \ k,l=1, 2, 3.
$$
Under the processing  by taking limits, we have
\begin{equation}\label{3.1-5}
\bar\lambda_{1}\bar h_{11k}+\bar\lambda_{2}\bar h_{22k}+\bar\lambda_{3}\bar h_{33k}=0, \ \  \ k=1, 2, 3
\end{equation}
and
\begin{equation}\label{3.1-6}
\begin{cases}
\begin{aligned}
&\sum_{i}\bar \lambda_{i}\bar h_{iikk}=-\sum_{i,j}\bar h^{2}_{ijk}, \ \  \ k=1, 2, 3, \\
&\sum_{i}\bar \lambda_{i}\bar h_{iikl}=-\sum_{i,j}\bar h_{ijk}\bar h_{ijl}, \ \ k\neq l, \ \  \ k,l=1, 2, 3.
\end{aligned}
\end{cases}
\end{equation}

\noindent It follows from Ricci identities \eqref{2.1-7} that
\begin{equation*}
\bar h_{ijkl}-\bar h_{ijlk}=\bar\lambda_{i}\bar\lambda_{j}\bar\lambda_{k}\delta_{il}\delta_{jk}
-\bar\lambda_{i}\bar\lambda_{j}\bar\lambda_{l}\delta_{ik}\delta_{jl}
+\bar\lambda_{i}\bar\lambda_{j}\bar\lambda_{k}\delta_{ik}\delta_{jl}
-\bar\lambda_{i}\bar\lambda_{j}\bar\lambda_{l}\delta_{il}\delta_{jk}.
\end{equation*}
That is,
\begin{equation}\label{3.1-7}
\begin{cases}
\begin{aligned}
& \bar h_{1122}-\bar h_{2211}=\bar \lambda_{1}\bar \lambda_{2}(\bar \lambda_{1}-\bar \lambda_{2}),\ \ \bar h_{1133}-\bar h_{3311}=\bar \lambda_{1}\bar \lambda_{3}(\bar \lambda_{1}-\bar \lambda_{3}),\\
&\bar h_{2233}-\bar h_{3322}=\bar \lambda_{2}\bar \lambda_{3}(\bar \lambda_{2}-\bar \lambda_{3}), \ \ \bar h_{iikl}-\bar h_{iilk}=0, \ \ \ i,k,l=1, 2, 3.
\end{aligned}
\end{cases}
\end{equation}

\noindent Since $f_{3}$ is constant,  by \eqref{2.1-10}, we know that

\begin{equation}\label{3.1-8}
\bar \lambda^{2}_{1}\bar h_{11k}+\bar \lambda^{2}_{2}\bar h_{22k}+\bar \lambda^{2}_{3}\bar h_{33k}=0, \ \  k=1, 2, 3
\end{equation}
and
\begin{equation}\label{3.1-9}
\begin{cases}
\begin{aligned}
&\sum_{i}\bar \lambda^{2}_{i}\bar h_{iikk}=-2\sum_{i,j}\bar \lambda_{i}\bar h^{2}_{ijk}, \ \  \ k=1, 2, 3, \\
&\sum_{i}\bar \lambda^{2}_{i}\bar h_{iikl}=-2\sum_{i,j}\bar \lambda_{i}\bar h_{ijk}\bar h_{ijl}, \ \ k\neq l, \ \  \ k,l=1, 2, 3.
\end{aligned}
\end{cases}
\end{equation}

From the above equations  \eqref{3.1-2}   to \eqref{3.1-9}, we can prove the following claim that $$\sup H=\bar H=\frac{3f_{3}}{2S}.$$
In fact, if   $\bar \lambda_1=\bar \lambda_2 =\bar \lambda_3$, from \eqref{3.1-6} and \eqref{3.1-9}, we have
\begin{equation*}
\begin{cases}
\begin{aligned}
\bar \lambda_{1}\sum_{i}\bar h_{ii11}
=&-(\bar h^{2}_{111}+\bar h^{2}_{221}+\bar h^{2}_{331})-2(\bar h^{2}_{112}+\bar h^{2}_{113}+\bar h^{2}_{123}),\\
\bar \lambda_{1}\sum_{i}\bar h_{ii22}
=&-(\bar h^{2}_{112}+\bar h^{2}_{222}+\bar h^{2}_{332})-2(\bar h^{2}_{221}+\bar h^{2}_{223}+\bar h^{2}_{123}),\\
\bar \lambda_{1}\sum_{i}\bar h_{ii33}
=&-(\bar h^{2}_{113}+\bar h^{2}_{223}+\bar h^{2}_{333})-2(\bar h^{2}_{331}+\bar h^{2}_{332}+\bar h^{2}_{123})
\end{aligned}
\end{cases}
\end{equation*}
and
\begin{equation*}
\begin{cases}
\begin{aligned}
&\bar \lambda^{2}_{1}\sum_{i}\bar h_{ii11}
=-2\bar \lambda_{1}(\bar h^{2}_{111}+\bar h^{2}_{221}+\bar h^{2}_{331})-4\bar \lambda_{1}(\bar h^{2}_{112}+\bar h^{2}_{113}+\bar h^{2}_{123}),\\

&\bar \lambda^{2}_{1}\sum_{i}\bar h_{ii22}
=-2\bar \lambda_{1}(\bar h^{2}_{112}+\bar h^{2}_{222}+\bar h^{2}_{332})-4\bar \lambda_{1}(\bar h^{2}_{221}+\bar h^{2}_{223}+\bar h^{2}_{123}),\\

&\bar \lambda^{2}_{1}\sum_{i}\bar h_{ii33}
=-2\bar \lambda_{1}(\bar h^{2}_{113}+\bar h^{2}_{223}+\bar h^{2}_{333})-4\bar \lambda_{1}(\bar h^{2}_{331}+\bar h^{2}_{332}+\bar h^{2}_{123}).
\end{aligned}
\end{cases}
\end{equation*}
Thus, we infer
\begin{equation*}
\bar h_{ijk}=0, \ \ i,j,k=1, 2, 3.
\end{equation*}
Then by \eqref{2.1-13}, we know that $S=0$. It is a contradiction.

\noindent
If two of  $\bar \lambda_1$, $\bar \lambda_2$ and $\bar \lambda_3$  are equal,
without loss of generality, we assume that $\bar \lambda_{1}\neq \bar \lambda_{2}=\bar \lambda_{3}$.
Then we get from \eqref{3.1-2} and \eqref{3.1-5} that

\begin{equation}\label{3.1-10}
\bar h_{11k}=0, \ \ \bar h_{22k}+\bar h_{33k}=0,\ \ \ k=1, 2, 3.
\end{equation}
\newline
If  $\bar \lambda_{1}=0$, then $\bar \lambda_{2}=\bar \lambda_{3} \neq 0$ since $S\neq 0$.
By making use of  equations \eqref{3.1-6}, \eqref{3.1-9} and \eqref{3.1-10}, we know that

\begin{equation*}
\begin{cases}
\begin{aligned}
&\bar \lambda_{2}(\bar h_{2211}+\bar h_{3311})=-2(\bar h^{2}_{221}+\bar h^{2}_{123}),\\
&\bar \lambda_{2}(\bar h_{2222}+\bar h_{3322})=-2(\bar h^{2}_{222}+\bar h^{2}_{223})-2(\bar h^{2}_{221}+\bar h^{2}_{123})
\end{aligned}
\end{cases}
\end{equation*}
and
\begin{equation*}
\begin{cases}
\begin{aligned}
\bar \lambda^{2}_{2}(\bar h_{2211}+\bar h_{3311})=&-4\bar\lambda_{2}(\bar h^{2}_{221}+\bar h^{2}_{123}),\\

\bar \lambda^{2}_{2}(\bar h_{2222}+\bar h_{3322})=&-4\bar\lambda_{2}(\bar h^{2}_{222}+\bar h^{2}_{223})-2\bar\lambda_{2}(\bar h^{2}_{221}+\bar h^{2}_{123}).
\end{aligned}
\end{cases}
\end{equation*}
Hence, we get
\begin{equation*}
\bar h_{221}=\bar h_{123}=0, \ \ \bar h_{222}=\bar h_{223}=0.
\end{equation*}
Namely, we obtain
\begin{equation*}
\bar h_{ijk}=0, \ \ i,j,k=1, 2, 3.
\end{equation*}
Then by \eqref{2.1-13}, we know that $S=0$. It is a contradiction.

\noindent
If $\bar \lambda_{1}\neq0, \ \ \bar \lambda_{2}=\bar \lambda_{3}=0$,
combining it with \eqref{3.1-6}, \eqref{3.1-9} and \eqref{3.1-10}, we have

\begin{equation*}
\begin{cases}
\begin{aligned}
&\bar \lambda_{1}\bar h_{1111}
=-2(\bar h^{2}_{221}+\bar h^{2}_{123}),\\
&\bar \lambda_{1}\bar h_{1122}
=-2(\bar h^{2}_{222}+\bar h^{2}_{223})-2(\bar h^{2}_{221}+\bar h^{2}_{123})
\end{aligned}
\end{cases}
\end{equation*}
and
\begin{equation*}
\begin{cases}
\begin{aligned}
&\bar \lambda^{2}_{1}\bar h_{1111}=0,\\

&\bar \lambda^{2}_{1}\bar h_{1122}=-2\bar \lambda_{1}(\bar h^{2}_{221}+\bar h^{2}_{123}).
\end{aligned}
\end{cases}
\end{equation*}
We infer
\begin{equation*}
\bar h_{221}=\bar h_{123}=0, \ \ \bar h_{222}=\bar h_{223}=0.
\end{equation*}
Thus, we conclude
\begin{equation*}
\bar h_{ijk}=0, \ \ i,j,k=1, 2, 3.
\end{equation*}
Then by \eqref{2.1-13}, we know that $S=0$. It is a contradiction.

\noindent
If $\bar \lambda_{1}\neq 0, \ \ \bar \lambda_{2}=\bar \lambda_{3} \neq 0$,
combining \eqref{3.1-2} with \eqref{3.1-3}, we obtain

\begin{equation}\label{3.1-11}
\lim_{t\rightarrow\infty} \langle X, e_{k} \rangle(p_{t})=0,\ \ \ k=1, 2, 3.
\end{equation}

\noindent
It follows from \eqref{3.1-4}, \eqref{3.1-6}, \eqref{3.1-9}, \eqref{3.1-10} and \eqref{3.1-11} that

\begin{equation}\label{3.1-12}
\begin{cases}
\begin{aligned}
&\bar h_{1111}+\bar h_{2211}+\bar h_{3311}=-\bar \lambda_{1}-\bar H\bar \lambda^{2}_{1},\\
&\bar h_{1122}+\bar h_{2222}+\bar h_{3322}=-\bar \lambda_{2}-\bar H\bar \lambda^{2}_{2},\\
&\bar h_{1112}+\bar h_{2212}+\bar h_{3312}= 0,\\
&\bar h_{1113}+\bar h_{2213}+\bar h_{3313}= 0,
\end{aligned}
\end{cases}
\end{equation}

\begin{equation}\label{3.1-13}
\begin{cases}
\begin{aligned}
\bar \lambda_{1}\bar h_{1111}+\bar \lambda_{2}(\bar h_{2211}+\bar h_{3311})&=-2(\bar h^{2}_{221}+\bar h^{2}_{123}),\\
\bar \lambda_{1}\bar h_{1122}+\bar \lambda_{2}(\bar h_{2222}+\bar h_{3322})&=-2(\bar h^{2}_{221}+\bar h^{2}_{123})-2(\bar h^{2}_{222}+\bar h^{2}_{223})
,\\
\bar \lambda_{1}\bar h_{1112}+\bar \lambda_{2}(\bar h_{2212}+\bar h_{3312})&=-2(\bar h_{221}\bar h_{222}+\bar h_{223}\bar h_{123}),\\
\bar \lambda_{1}\bar h_{1113}+\bar \lambda_{2}(\bar h_{2213}+\bar h_{3313})&=-2(\bar h_{221}\bar h_{223}-\bar h_{222}\bar h_{123})
\end{aligned}
\end{cases}
\end{equation}
and
\begin{equation}\label{3.1-14}
\begin{cases}
\begin{aligned}
\bar \lambda^{2}_{1}\bar h_{1111}+\bar \lambda^{2}_{2}(\bar h_{2211}+\bar h_{3311})=&-4\bar\lambda_{2}(\bar h^{2}_{221}+\bar h^{2}_{123}),\\
\bar \lambda^{2}_{1}\bar h_{1122}+\bar \lambda^{2}_{2}(\bar h_{2222}+\bar h_{3322})=&-2(\bar\lambda_{1}+\bar\lambda_{2})(\bar h^{2}_{221}+\bar h^{2}_{123})\\
&-4\bar\lambda_{2}(\bar h^{2}_{222}+\bar h^{2}_{223}),\\
\bar \lambda^{2}_{1}\bar h_{1112}+\bar \lambda^{2}_{2}(\bar h_{2212}+\bar h_{3312})=&-4\bar\lambda_{2}(\bar h_{221}\bar h_{222}+\bar h_{223}\bar h_{123}),\\
\bar \lambda^{2}_{1}\bar h_{1113}+\bar \lambda^{2}_{2}(\bar h_{2213}+\bar h_{3313})=&-4\bar\lambda_{2}(\bar h_{221}\bar h_{223}-\bar h_{222}\bar h_{123}).
\end{aligned}
\end{cases}
\end{equation}

\noindent Besieds, \eqref{3.1-12} yields
\begin{equation}\label{3.1-15}
\bar h_{2212}+\bar h_{3312}= -\bar h_{1112},\ \
\bar h_{2213}+\bar h_{3313}= -\bar h_{1113}.
\end{equation}
Inserting \eqref{3.1-15} into \eqref{3.1-13} and \eqref{3.1-14}, we derive  to
\begin{equation}\label{3.1-16}
\bar h_{221}\bar h_{222}+\bar h_{223}\bar h_{123}=0, \ \ \bar h_{221}\bar h_{223}-\bar h_{222}\bar h_{123}=0.
\end{equation}

\noindent It follows from \eqref{3.1-13} and \eqref{3.1-14} that

\begin{equation}\label{3.1-17}
\begin{cases}
\begin{aligned}
&\bar \lambda_{1}(\bar \lambda_{2}-\bar \lambda_{1})\bar h_{1111}= 2\bar\lambda_{2}(\bar h^{2}_{221}+\bar h^{2}_{123}),\\

&\bar \lambda_{1}(\bar \lambda_{2}-\bar \lambda_{1})\bar h_{1122}= 2\bar\lambda_{2}(\bar h^{2}_{222}+\bar h^{2}_{223})+2\bar\lambda_{1}(\bar h^{2}_{221}+\bar h^{2}_{123}).
\end{aligned}
\end{cases}
\end{equation}
The following situations can be obtained from \eqref{3.1-16}:
$\bar h^{2}_{221}+\bar h^{2}_{123} =0$, or $\bar h^{2}_{222}+\bar h^{2}_{223}=0$.
\vskip1mm
If both $\bar h^{2}_{221}+\bar h^{2}_{123}=0$  and $ \bar h^{2}_{222}+\bar h^{2}_{223} =0$,
we have
\begin{equation*}
\bar h_{ijk}=0, \ \ i,j,k=1, 2, 3.
\end{equation*}
Then by \eqref{2.1-13}, we know that $S=0$. It is a contradiction.

\vskip1mm
If $\bar h^{2}_{221}+\bar h^{2}_{123}=0$ and $\bar h^{2}_{222}+\bar h^{2}_{223} \neq 0$,
by making use of \eqref{3.1-17}, we have
\begin{equation*}
\bar \lambda_{1}(\bar \lambda_{2}-\bar \lambda_{1})\bar h_{1111}= 0,\ \
\bar \lambda_{1}(\bar \lambda_{2}-\bar \lambda_{1})\bar h_{1122}= 2\bar\lambda_{2}(\bar h^{2}_{222}+\bar h^{2}_{223}).
\end{equation*}
Thus,
\begin{equation}\label{3.1-18}
\bar h_{1111}= 0, \ \ \bar h_{1122}=\frac{2\bar\lambda_{2}}{\bar \lambda_{1}(\bar \lambda_{2}-\bar \lambda_{1})}(\bar h^{2}_{222}+\bar h^{2}_{223}).
\end{equation}

\noindent
Noting that $\bar h_{1111}=0$ and by the first equation of \eqref{3.1-13}, we know
\begin{equation*}
\bar h_{2211}+\bar h_{3311}=0, \ \ \bar H_{,11}=0.
\end{equation*}
Hence, by the first equation of \eqref{3.1-12}, we have
\begin{equation}\label{3.1-19}
\bar \lambda_{1}\bar H+1=0.
\end{equation}
Combining the second equation of \eqref{3.1-12}, the second equation of \eqref{3.1-13} with \eqref{3.1-18}, we know
\begin{equation*}
\begin{aligned}
-2(\bar h^{2}_{222}+\bar h^{2}_{223})&=\bar \lambda_{1}\bar h_{1122}+\bar \lambda_{2}\Big(-\bar \lambda_{2}-\bar H\bar \lambda^{2}_{2}-\bar h_{1122}\Big) \\
&=\bar \lambda_{2}\Big(-\bar \lambda_{2}-\bar H\bar \lambda^{2}_{2}\Big)+(\bar \lambda_{1}-\bar \lambda_{2}) \cdot \frac{2\bar\lambda_{2}}{\bar \lambda_{1}(\bar \lambda_{2}-\bar \lambda_{1})}(\bar h^{2}_{222}+\bar h^{2}_{223}) \\
&=\bar \lambda_{2}\Big(-\bar \lambda_{2}-\bar H\bar \lambda^{2}_{2}\Big)-\frac{2\bar\lambda_{2}}{\bar \lambda_{1}}(\bar h^{2}_{222}+\bar h^{2}_{223}).
\end{aligned}
\end{equation*}
Then by\eqref{3.1-19}, we obtain
\begin{equation}\label{3.1-20}
\bar h^{2}_{222}+\bar h^{2}_{223}=\frac{\bar \lambda^{2}_{2}}{2}.
\end{equation}
According to \eqref{2.1-13} and \eqref{3.1-20}, we have that
\begin{equation*}
\sum_{i,j,k}h_{ijk}^{2}=S(S+1), \ \ \sum_{i,j,k}h_{ijk}^{2}=4(\bar h^{2}_{222}+\bar h^{2}_{223})=2\bar \lambda^{2}_{2}.
\end{equation*}
Thus,
$$S(S+1)=2\bar \lambda^{2}_{2}<\bar \lambda^{2}_{1}+2\bar \lambda^{2}_{2}=S.$$
It is impossible.

\vskip1mm
If  $\bar h^{2}_{221}+\bar h^{2}_{123} \neq 0$ and $ \bar h^{2}_{222}+\bar h^{2}_{223}=0$,
according to \eqref{3.1-17} , we have
\begin{equation}\label{3.1-21}
\begin{aligned}
&\bar \lambda_{1}(\bar \lambda_{2}-\bar \lambda_{1})\bar h_{1111}= 2\bar\lambda_{2}(\bar h^{2}_{221}+\bar h^{2}_{123}),\\
&\bar \lambda_{1}(\bar \lambda_{2}-\bar \lambda_{1})\bar h_{1122}= 2\bar\lambda_{1}(\bar h^{2}_{221}+\bar h^{2}_{123}).
\end{aligned}
\end{equation}
Thus,
\begin{equation}\label{3.1-22}
\bar h_{1111}= \frac{2\bar\lambda_{2}}{\bar \lambda_{1}(\bar \lambda_{2}-\bar \lambda_{1})}(\bar h^{2}_{221}+\bar h^{2}_{123}), \ \ \bar h_{1122}=\frac{2}{\bar \lambda_{2}-\bar \lambda_{1}}(\bar h^{2}_{221}+\bar h^{2}_{123}).
\end{equation}
Combining the second equation of \eqref{3.1-12}, the second equation of \eqref{3.1-13} with \eqref{3.1-22}, we know
\begin{equation*}
\begin{aligned}
-2(\bar h^{2}_{221}+\bar h^{2}_{123})&=\bar \lambda_{1}\bar h_{1122}+\bar \lambda_{2}\big(-\bar \lambda_{2}-\bar H\bar \lambda^{2}_{2}-\bar h_{1122}\big) \\
&=\bar \lambda_{2}\big(-\bar \lambda_{2}-\bar H\bar \lambda^{2}_{2}\big)+(\bar \lambda_{1}-\bar \lambda_{2})\bar h_{1122} \\
&=-\bar \lambda^{2}_{2}\big(1+\bar H\bar \lambda_{2}\big)-2(\bar h^{2}_{221}+\bar h^{2}_{123}).
\end{aligned}
\end{equation*}
Hence,
\begin{equation}\label{3.1-23}
\bar \lambda_{2}\bar H+1=0.
\end{equation}
By making use of the first equation of \eqref{3.1-12}, the first equation of \eqref{3.1-13}, \eqref{3.1-22} and \eqref{3.1-23}, we know

\begin{equation*}
\begin{aligned}
-2(\bar h^{2}_{221}+\bar h^{2}_{123})&=\bar \lambda_{1}\bar h_{1111}+\bar \lambda_{2}\big(-\bar \lambda_{1}-\bar H\bar \lambda^{2}_{1}-\bar h_{1111}\big) \\
&=\bar \lambda_{2}\big(-\bar \lambda_{1}-\bar H\bar \lambda^{2}_{1}\big)+(\bar \lambda_{1}-\bar \lambda_{2})\bar h_{1111} \\
&=-\bar \lambda_{1}\bar \lambda_{2}\big(1-\frac{\bar \lambda_{1}}{\bar \lambda_{2}}\big)-\frac{2\bar \lambda_{2}}{\bar \lambda_{1}}(\bar h^{2}_{221}+\bar h^{2}_{123}).
\end{aligned}
\end{equation*}
Thus,
\begin{equation*}
\bar h^{2}_{221}+\bar h^{2}_{123}=-\frac{\bar \lambda^{2}_{1}}{2}.
\end{equation*}
It is a contradiction.

\vskip2mm
\noindent
If  $\bar \lambda_1$, $\bar \lambda_{2}$ and $\bar \lambda_3$ are distinct,
by use of \eqref{3.1-2}, \eqref{3.1-5} and \eqref{3.1-8}, we have that
\begin{equation*}
\bar h_{11k}=\bar h_{22k}=\bar h_{33k}=0, \ \ k=1, 2, 3,
\end{equation*}
and
\begin{equation*}
\sum_{i,j,k}\bar h_{ijk}^{2}=6\bar h^{2}_{123}, \ \ 2\sum_{i,j,k,l}\bar h_{ijl}\bar h_{jkl}\bar h_{ki}
=4\bar H\bar h^{2}_{123}.
\end{equation*}
Then it follows from \eqref{2.1-13} and \eqref{2.1-14} in the Lemma \ref{lemma 2.1} that
\begin{equation*}
6\bar h^{2}_{123}=S(S+1), \ \ 4\bar H\bar h^{2}_{123}=(S+1)f_{3},
\end{equation*}
namely,
\begin{equation*}
 (2\bar H S-3f_{3})(S+1)=0.
\end{equation*}
Hence,
\begin{equation*}
\bar H =\frac{3f_{3}}{2S}.
\end{equation*}

\end{proof}

By applying the generalized maximum principle due to Omori and Yau to the function $-H$, we can obtain the following
\begin{proposition}\label{proposition 3.2}
For a $3$-dimensional complete self-expander $X:M^{3}\rightarrow \mathbb{R}^{4}$ with  non-zero constant squared norm $S$ of the second fundamental form, if $f_{3}$ is constant,
we have that $\inf H=\frac{3f_{3}}{2S}$.
\end{proposition}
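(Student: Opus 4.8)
The plan is to run the proof of Proposition \ref{proposition 3.1} essentially verbatim, with the only change that the generalized maximum principle (Lemma \ref{lemma 2.3}) is applied to $-H$ rather than to $H$. Since $S$ is constant, the Gauss equation \eqref{2.1-3} bounds the Ricci curvature from below, so Lemma \ref{lemma 2.3} applies to $-H$ and produces a sequence $\{q_t\}\subset M^3$ with $-H(q_t)\to\sup(-H)=-\inf H$, $|\nabla H|(q_t)\to 0$, and $\limsup_{t\to\infty}\Delta(-H)(q_t)\le 0$. Because $S$ and $f_3$ are constant, the pointwise identity $\sum_{i,j,k}h_{ijk}^2=S(S+1)$ coming from \eqref{2.1-13}, together with the bound on $\sum_{i,j,k,l} (h_{ijkl})^2$ furnished by \eqref{2.1-15}, guarantees that $\{h_{ij}(q_t)\}$, $\{h_{ijk}(q_t)\}$ and $\{h_{ijkl}(q_t)\}$ are bounded; after passing to a subsequence they converge to limits $\bar h_{ij}=\bar\lambda_i\delta_{ij}$, $\bar h_{ijk}$, $\bar h_{ijkl}$, and $H(q_t)\to\bar H=\inf H$.

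Next I would observe that every limiting relation used in Proposition \ref{proposition 3.1}, namely \eqref{3.1-2}, \eqref{3.1-5}, \eqref{3.1-8} and the systems \eqref{3.1-6}, \eqref{3.1-9}, together with the Ricci-identity consequences and the self-expander Hessian formula \eqref{2.1-11}, is derived solely from the vanishing of $|\nabla H|$ in the limit, from the constancy of $S$ and $f_3$, and from the structure equations; none of them invokes the sign of $\Delta H$. Consequently all of \eqref{3.1-2}--\eqref{3.1-9} continue to hold verbatim at the new limit point determined by $\{q_t\}$.

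I would then carry out the same case analysis on the limiting principal curvatures $\bar\lambda_1,\bar\lambda_2,\bar\lambda_3$. When they are not pairwise distinct, the computations in Proposition \ref{proposition 3.1} force $\bar h_{ijk}=0$ for all $i,j,k$, whence $\sum_{i,j,k}\bar h_{ijk}^2=0$ and \eqref{2.1-13} gives $S=0$, a contradiction; the degenerate subcases are excluded in exactly the same way, each terminating either in $S=0$ or in an impossible equality such as $\bar h_{221}^2+\bar h_{123}^2=-\bar\lambda_1^2/2$ or $S(S+1)<S$. Since these exclusions rest only on equalities and not on any inequality involving $\Delta H$, they transfer without change. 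In the remaining case, with $\bar\lambda_1,\bar\lambda_2,\bar\lambda_3$ distinct, relations \eqref{3.1-2}, \eqref{3.1-5}, \eqref{3.1-8} give $\bar h_{iik}=0$, so that $\sum_{i,j,k}\bar h_{ijk}^2=6\bar h_{123}^2$ and $2\sum_{i,j,k,l}\bar h_{ijl}\bar h_{jkl}\bar h_{ki}=4\bar H\bar h_{123}^2$; substituting into the constant-$S$ and constant-$f_3$ identities \eqref{2.1-13} and \eqref{2.1-14} yields $(2\bar H S-3f_3)(S+1)=0$, hence $\bar H=\frac{3f_3}{2S}$. As $\bar H=\inf H$, this is the claim.

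The only point requiring genuine care is the observation of the second paragraph: one must confirm that the proof of Proposition \ref{proposition 3.1} is truly symmetric under $H\mapsto -H$, that is, that it never uses $\limsup\Delta H\le 0$ but only the gradient condition and the constancy of $S$ and $f_3$. Granting this verification, the argument is merely a reflection of the preceding one and requires no new computation.
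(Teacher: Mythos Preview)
Your proposal is correct and matches the paper's own approach: the paper states Proposition~\ref{proposition 3.2} as an immediate consequence of applying the Omori--Yau principle to $-H$ and rerunning the argument of Proposition~\ref{proposition 3.1}. Your explicit verification that the proof of Proposition~\ref{proposition 3.1} never actually invokes the sign condition $\limsup\Delta H\le 0$ (relying only on $|\nabla H|\to 0$, the constancy of $S$ and $f_3$, and the exact Hessian formula \eqref{2.1-11} with \eqref{3.1-11}) is the right point to flag and is indeed what makes the reflection $H\mapsto -H$ legitimate.
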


\vskip3mm
\noindent
{\it Proof of Theorem \ref{theorem 1.1}}.
If $S=0$, we know that $X: M^{3}\to \mathbb{R}^{4}$ is a hyperplane
$\mathbb {R}^{3}$ through the origin. If $S\neq0$,
from the Proposition \ref{proposition 3.1} and the Proposition \ref{proposition 3.2}, we have that $\sup H=\inf H=\frac{3f_{3}}{2S}$. That is, the mean curvature $H$ and
the principal curvatures are constant. Then by a classification theorem due to Lawson \cite{Law}, $X: M^{3}\to \mathbb{R}^{4}$ is $\mathbb{S}^{k}(r)\times \mathbb{R}^{3-k}, \ \ k=1,2,3$. However, for the constant mean curvature $H$, we infer that $H=0$ from \eqref{2.1-12}. It is a contradiction. So the main theorem of the present paper is proved.

\begin{flushright}
$\square$
\end{flushright}

\noindent{\bf Acknowledgements}
The first author was partially supported by the China Postdoctoral Science Foundation Grant No.2022M711074. The second author was partly supported by grant No.12171164 of NSFC, GDUPS (2018), Guangdong Natural Science Foundation Grant No.2023A1515010510.

\noindent{\bf Declarations}

\noindent{\bf Conflict of interest} There are no conflicts of interest with third parties.

\noindent{\bf Data availability} Data sharing not applicable to this article as no datasets were generated or analysed during the current study.

\end{document}